\documentclass{amsart}
\author{Umar Hayat}
\title{A note on the canonical divisor of the generalised affine Stiefel algebraic varieties}
\date{Monday, December 13, 2014}
\address{Umar Hayat, Mathematics Section, International Center for Theoretical Physics, Trieste, Italy}
\address{Faculty of Engineering Sciences, GIK Institute of Engineering Sciences and Technology, Topi, Khyber Pakhtunkhwa, Pakistan}
\email{umarmaths@gmail.com}
\usepackage{amsmath,bm,graphicx}
\usepackage{subfigure}
\usepackage{amssymb}
\usepackage{amsfonts}
\usepackage{amsthm}
\usepackage{comment}
\newcommand{\GL}{GL}

\newcommand{\Sym}{S}

\newcommand{\I}{I}

\newcommand{\Hom}{Hom}

\newcommand{\Stab}{Stab}

\newcommand{\GSV}{GSV}

\newcommand{\IC}{\mathbb{C}}

\newcommand{\IT}{\mathbb{T}}

\newcommand{\cO}{\mathcal{O}}

\newtheorem{pro}{Proposition}
\newtheorem{thm}{Theorem}

\begin{document}
\begin{abstract}
In this paper we study certain homogeneous spaces, which we call generalised affine Stiefel algebraic varieties. The main aim is to characterise the canonical divisor of generalised affine Stiefel algebraic varieties  in terms of group representations. Affine Stiefel algebraic varieties and in particular $S^{n}$ are two special cases of the generalised affine Stiefel algebraic varieties.   

\end{abstract}

\subjclass[2010]{Primary 14M17,14J60; Secondary 20C15}

\keywords{Homogeneous spaces, Weyl group, Stiefel varieties, Canonical class}

\maketitle

\pagestyle{myheadings}

\markboth{UMAR HAYAT}{The Generalised affine Stiefel Varieties}

\section{Introduction}
\label{sec-intro}
Let $G$ be an algebraic group. If $G$ acts on a  space $M$ transitively, we say that $M$ is a  homogeneous space of $G$. Let $x \in M$, the group elements fixing $x$ is a subgroup of $G$. We call this subgroup stabiliser or isotropy subgroup.  Equivalently,  a homogeneous space is of the form $G/H$, where $G$ is an algebraic group and $H$ a closed subgroup of $G$. Homogeneous spaces play a vital role in the representation theory of the algebraic group because representations are often realised as the space of sections of vector bundles over homogeneous spaces. Many authors \cite{UH,HOC,KM,PR,PR1} have studied homogeneous and  quasi-homogeneous spaces in different contexts. Corti and Reid \cite{RC} studied weighted analogs of the homogeneous spaces.

Let $X$ be an $r\times s$ matrix where $r\leq s$. An affine Stiefel algebraic variety is defined by the following matrix equation
$$
XX^{tr}=I_{r\times r}.
$$\\
Let $X$ and $Y$ be $r\times s$ and $s\times r$ matrices. We define a variety $V \in \IC^{2(r\times s)}$ by the following matrix equation
$$
XY=I_{r\times r}.
$$
We call this variety generalised affine  Stiefel algebraic variety and denote it by $\GSV(r,s)$ . \\
In this paper, we show that  $\GSV(r,s)$ is a homogeneous space, an orbit of a vector which is not a weight vector. 

In section $2$, proposition $1$, we prove that the canonical divisor of the  generalised affine Stiefel algebraic variety  $\GSV(r,s)$ is Cartier. We use a representation-theoretic approach to calculate the canonical class of the $\GSV(r,s)$ in section $3$, Theorem $1$. 

\section{The variety in Equations}
 Let $X$ and $Y$ be the $r\times s$ and $s\times r$ matrices as given below:

\[
 X =
 \begin{pmatrix}
  x_{11} & x_{12} & \cdots & x_{1s} \\
  \vdots  & \vdots  & \ddots  & \vdots \\ 
  x_{r1} & x_{r2} & \cdots & x_{rs} \\
   \end{pmatrix}
   \qquad
\text{and }
\qquad
 Y =
 \begin{pmatrix}
  y_{11} & y_{12} & \cdots  & y_{1r} \\
   \vdots  & \vdots  & \ddots& \vdots    \\
  y_{s1} & y_{s2} & \cdots & y_{sr}\\
 \end{pmatrix},
\]
where $r\leq s.$

We define a variety $V \subset \IC^{2(r\times s)}$ by the equation
$$
XY=\I_{r\times r}. 
$$
If both $X$ and $Y$  are of maximal rank then $V$  has codimension $rs+r(s-r)$. We show in section $3$ that $V$ is a homogeneous space, the orbit of the vector  

\[
\left(
 X_{0} =
 \begin{pmatrix}
  I_{r\times r} & 0_{r\times (s-r)}  \\
     \end{pmatrix}
\text{, }
\quad
 Y_{0} =
 \begin{pmatrix}
  I_{r\times r} \\
  0_{(s-r)\times s}  \\
   \end{pmatrix}   
   \right),
\]
under an action of $G=\GL(r)\times \GL(s)$. This is explained in section \ref{s!Qausi}.

When $X$ and $Y$ are of  maximal rank then we can assume the first minor $ X_{s_{1},\cdots, s_{r}}$ of $X$ is nonzero. We can use that to solve the top $r$ rows of $Y$ in terms of remaining entries of $Y$ and $X$.
 
In all other cases where rank $X\leq r-1$ or rank $Y\leq r-1$, the condition $XY=\I_{r\times r}$  is not satisfied so it does not influence the calculaton of the canonical class of $V$.
\subsection{The canonical class of $\GSV(r,s)$}\label{s!KV}
 If we assume that some $r\times r$  minor  of $X$ is nonzero then we can use that to solve for the $r$ rows of $Y$ and the coordinates will be the remaining entries of $Y$ and $X$. For example, suppose that the first minor $ X_{s_{1},\cdots, s_{r}}$ of $X$ is  nonzero. Then we can write entries of first $r$ rows of $Y$, $y_{11}$,$\cdots$  ,$y_{1r}$, $y_{21}$, $y_{22}$,$\cdots$ ,$y_{2r}$, $\cdots$ $y_{r1}$, $y_{r2}$,$\cdots$ ,$y_{rr}$  in terms of remaining entries of $X$ and $Y$. 
 Similarly, if we assume that the minor $ X_{s_{2},\cdots, s_{r+1}}$ of $X$ is nonzero then we can solve for $y_{21}$,$\cdots$  ,$y_{2r}$, $y_{31}$, $n_{32}$,$\cdots$ ,$y_{3r}$, $\cdots$ ,$y_{r+1,1}$, $y_{r+1,2}$,$\cdots$ ,$y_{r+1,r}$. 
 
 Here we explicitly explain  in terms of coordinates. Let $U_{X_{s_{1},\cdots, s_{r}}\neq 0}$  and \\ $U_{X_{s_{2},\cdots, s_{r+1}}\neq 0}$  be the two charts for $V$ with coordinates \linebreak
$\zeta_{1},\dots, \zeta_{rs}, \xi_{rs+1} \cdots , \xi_{rs+r(s-r)}$ and $\mu_{1},\dots, \mu_{rs}, \mu_{rs+1} \cdots , \mu_{rs+r(s-r)}.$ 
The majority of the coordinates between the two charts are common because these charts differ only by one row of the $Y$ and there are exactly $rs+r(s-r)-r$ coordinates in common. The coordinate transformation matrix  $J$ from one chart to the other is given by
\[
J=
 \begin{pmatrix} 
 I_{rs+r(s-r)-r \times rs+r(s-r)-r} & 0_{rs+r(s-r)-r \times r}\\
 C_{r \times rs+r(s-r)-r} & D_{r\times r}\\
  \end{pmatrix},  
  \] 
where the submatrix   $C_{r \times rs+r(s-r)-r}$ is formed of the partial derivatives of the non-overlapping variables with respect to the overlapping variables and $D$ is the $r\times r$ diagonal matrix whose diagonal entries are $\dfrac{ X_{s_{2}, \cdots, s_{r+1}}}{ X_{s_{1},\cdots, s_{r}}}$  and the determinant of the Jacobian matrix $J$ is $\bigg(\dfrac{ X_{s_{2}, \cdots, s_{r+1}}}{ X_{s_{1},\cdots, s_{r}}}\bigg) ^{r}$.

 Since the minor $X_{s_{1},\cdots, s_{r}}$ is nonzero on the chart  $U_{X_{s_{1},\cdots, s_{r}}}$ and similarly  $X_{s_{2},\cdots, s_{r+1}}$ is invertible on the chart $U_{X_{s_{2},\cdots, s_{r+1}}}$. Suppose

\begin{equation*}\label{s12}
 \sigma_{1, \cdots, r}= \dfrac{d\zeta_{1}\wedge \dots \wedge d\zeta_{rs+r(s-r)}}{(X_{s_{1},\cdots, s_{r}})^{r}} \text{ and similarly }  \sigma_{2, \cdots, r+1}= \dfrac{d\mu_{1}\wedge \dots \wedge d\mu_{rt+r(s-r)}}{(X_{s_{2}, \cdots, s_{r+1}})^{r}}.
 \end{equation*}
 To put $(X_{s_{1},\cdots, s_{r}})^{r}$ in the denominator is a convenient trick to cancel out the deteminant of the Jacobian matrix and will appear again later.  
The sheaf of the canonical differentials is 
 $$
\cO(K_{V})=\bigwedge^{rs+r(s-r)}\Omega^{1}_{V} \text{ and }
\cO(K_{V})\mid_{U_{X_{s_{1},\cdots, s_{r}}\neq 0}}=\cO_{U_{X_{s_{1},\cdots, s_{r}}\neq 0}}\cdot \sigma_{1, \cdots, r} 
$$
 The above calculation of the Jacobian determinant shows that $\sigma_{1,\cdots, r}= \sigma_{2,\cdots, r+1}$ and repeating the same calculation defines $\sigma= \sigma_{i_{1}, \cdots, i_{r}}$  independently of $i_{1}, \cdots, i_{r}$. Since $\sigma_{i_{1}, \cdots, i_{r}}$ is a basis of $\bigwedge^{rt+r(s-r)}\Omega^{1}_{V}$ and has no zeros or poles, exactly because of the $X_{i_{1}, \cdots, i_{r}}$ in the denominator, we have   $$
K_{V}=divisor(\sigma)=0.
$$
In the above discussion we have shown that there is an open cover $\{ (U_{X_{i_{1}, \cdots, i_{r}}}) \}$  for the $\GSV(r,s)$, with transition functions $\dfrac{1}{X_{i_{1}, \cdots, i_{r}}} \in k(U_{X_{i_{1}, \cdots, i_{r}} })^{\ast}$. Furthermore $\dfrac{X_{s_{1},\cdots, s_{r}}^{r}}{X_{s_{2},\cdots, s_{r+1}}^{r}} \in \cO^{*}(U_{X_{s_{1},\cdots, s_{r}} }\cap U_{X_{s_{2},\cdots, s_{r+1}} })= \cO^{*}(U_{X_{s_{1},\cdots, s_{r}} },_{X_{s_{2},\cdots, s_{r+1} } })$.\\
We summarise the discussion in the preceding section in the following proposition. 
\begin{pro}
{\it The canonical divisor of the generalised affine Stiefel algebraic variety $\GSV(r,s)$ is Cartier}.

\end{pro}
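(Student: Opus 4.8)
The plan is to reduce the statement to showing that the canonical sheaf $\cO(K_V) = \bigwedge^{rs + r(s-r)} \Omega^1_V$ is an invertible $\cO_V$-module, since a Weil divisor is Cartier precisely when its associated sheaf is locally free of rank one. The cover I would use is $\{\, U_{X_{i_1, \cdots, i_r} \neq 0} \,\}$, indexed by the $r \times r$ minors of $X$. That these charts cover $\GSV(r,s)$ is forced by the defining equation: on $V$ the relation $XY = \I_{r \times r}$ implies $\mathrm{rank}(X) = r$, so some maximal minor of $X$ is nonzero at every point, and the loci where the rank of $X$ or $Y$ drops below $r$ do not meet $V$.

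On a single chart $U_{X_{i_1, \cdots, i_r} \neq 0}$ one solves the $r$ rows of $Y$ indexed by the chosen minor in terms of the remaining entries of $X$ and $Y$, which exhibits the chart as an affine space with coordinates $\zeta_1, \dots, \zeta_{rs + r(s-r)}$. Hence $V$ is smooth there, $\Omega^1_V$ is free, and its top exterior power is free of rank one, generated by the form $\sigma_{i_1, \cdots, i_r} = d\zeta_1 \wedge \cdots \wedge d\zeta_{rs + r(s-r)} / (X_{i_1, \cdots, i_r})^r$. The role of the denominator $(X_{i_1, \cdots, i_r})^r$ is exactly to make this generator nowhere vanishing on its own chart, so that $\sigma_{i_1, \cdots, i_r}$ has no zeros or poles there.

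It remains to compare two such generators on an overlap. Using the Jacobian matrix $J$ computed above, whose determinant is $\left( X_{s_2, \cdots, s_{r+1}} / X_{s_1, \cdots, s_r} \right)^r$, the change of the naive frame $d\zeta_1 \wedge \cdots$ across charts is precisely this factor, and it is cancelled by the chosen denominators, so that $\sigma_{s_1, \cdots, s_r} = \sigma_{s_2, \cdots, s_{r+1}}$ on the intersection. Running this over all pairs of index sets, the local generators agree on all overlaps; equivalently the transition functions $X_{s_1, \cdots, s_r}^r / X_{s_2, \cdots, s_{r+1}}^r$ belong to $\cO^*$ on each intersection, which is the cocycle condition certifying that $\cO(K_V)$ is invertible. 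Therefore $K_V$ is Cartier --- in fact, the agreeing generators patch to a global nowhere-vanishing section $\sigma$, so $K_V = \text{divisor}(\sigma) = 0$ is trivial.

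I expect the main obstacle to be the Jacobian determinant computation itself: verifying that, under the change from the minor $X_{s_1, \cdots, s_r}$ to $X_{s_2, \cdots, s_{r+1}}$, the transformation matrix has the block form with the identity on the overlapping coordinates and a scalar block $D = (X_{s_2, \cdots, s_{r+1}} / X_{s_1, \cdots, s_r}) \cdot \I$ on the non-overlapping ones, so that $\det J$ is the asserted $r$-th power. Once this bookkeeping is confirmed, the invertibility of $\cO(K_V)$, and hence the Cartier property, is immediate.
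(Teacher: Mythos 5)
Your proposal is correct and follows essentially the same route as the paper: the same open cover $\{U_{X_{i_1,\cdots,i_r}\neq 0}\}$ by nonvanishing maximal minors, the same local generators $\sigma_{i_1,\cdots,i_r}$ with the denominator $(X_{i_1,\cdots,i_r})^r$ chosen to cancel the Jacobian determinant $\bigl(X_{s_2,\cdots,s_{r+1}}/X_{s_1,\cdots,s_r}\bigr)^r$, and the same conclusion that the generators glue to a global nowhere-vanishing section $\sigma$, so $K_V=\mathrm{divisor}(\sigma)=0$ is Cartier. Your added observation that $XY=\I_{r\times r}$ forces $\mathrm{rank}(X)=r$, so the charts genuinely cover $V$, makes explicit a point the paper only states in passing.
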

\section{The generalised affine Stiefel Variety $\GSV(r,s)$ as a homogeneous space}\label{s!Qausi}
The main goal of this section is to study the variety $V$ as an orbit of a special vector, which is not a weight vector. We fix $G=\GL(r)\times \GL(s)$. Note that $G$ is  a reductive algebraic group. Let $W_{r}$ and $W_{s}$ be the natural representations of $\GL(r)$ and $\GL(s)$ of dimension  $r$ and $s$  respectively.

We wish to define an action of $G=\GL(r)\times \GL(s)$ on the representation $W=\Hom(W_{r},W_{s})\oplus\Hom(W_{s}, W_{r})$ that keeps $V$ invariant. In coordinate-free terms, $X\in\Hom(W_{r},W_{s})$ and $Y\in\Hom(W_{s}, W_{r})$  and the action of $(A,B)\in G$ with $A\in\GL(r)$, $B\in\GL(s)$ is defined  as follows,
\begin{align*}
X &\longmapsto AXB^{-1} \\
Y &\longmapsto BYA^{-1}. \\
\end{align*}
If $X$ and $Y$ are  matrices of maximal rank then  using row and column operations we can put  $X$ and $Y$ in the following form
\[
\left(
 X_{0} =
 \begin{pmatrix}
  I_{r\times r} & 0_{r\times s-r}  \\
     \end{pmatrix}
\text{, }
\quad
 Y_{0} =
 \begin{pmatrix}
  I_{r\times r} \\
  0_{s-r\times r}  \\
   \end{pmatrix}
   \quad
   \right).
\]
The subgroup of $G$ that stabilises the vector $v=\left(  X_{0},\text{ } Y_{0} \right) $ is given by
$$
H=\Stab(v)=\left\{\left( A,B\right) \text{ }\bigg \vert 
 B =
 \begin{pmatrix}
 A & 0 \\ 
 0 & *_{s-r}\\
 \end{pmatrix}
 \right\}, 
$$
where ($*$) means there is no restriction on this block. One can observe that $v$ is not the highest weight vector, it is not even a weight vector. The action of $G$ is transitive and 
$$
V=G/H\simeq G \cdot v \hookrightarrow W.
$$
The generalised affine Stiefel algebraic variety  $\GSV(r, s)$ is a homogeneous space with the natural action of $G$.
\subsection{The Weyl group $W(G)$}
To study the algebraic group $G=\GL(r)\times \GL(s)$ and its representations we use the Weyl group  $W(G)\cong \Sym_{r}\times \Sym_{s}$ which acts as a permutation group. We know from section \ref{s!Qausi}  that  $W=\Hom(W_{r},W_{s})\oplus\Hom(W_{s}, W_{r})$ is a representation of $G$.  The Weyl group acts on $W$ as follows. The group $\Sym_{r}$ acts on any $X\in \Hom(W_{r},W_{s})$ from the left and permutes the rows while $\Sym_{s}$ acts on the right and permutes the columns.
Similarly, $\Sym_{s}$ acts on $Y\in \Hom(W_{s}, W_{r})$ from the left and permutes the rows and $\Sym_{r}$ acts on right and permutes the columns.
\subsection{The torus action and Weyl group}
Let $\IT\subset G$ given below
\begin{multline*}
\IT = \Biggl[  T_{A}=
\begin{pmatrix}
  a_{11} & 0 & \cdots & 0 \\
  0 & a_{22} & \cdots & 0 \\
  \vdots  & \vdots  & \ddots & \vdots \\
  0 & 0 & \cdots & a_{rr} \\
  \end{pmatrix}, \text{  }
T_{B}= 
 \begin{pmatrix}
  b_{11} & 0 & \cdots & 0 \\
  0 & b_{22} & \cdots & 0 \\
  \vdots  & \vdots  & \ddots & \vdots \\
  0 & 0 & \cdots & b_{ss} \\
  \end{pmatrix} 
 \Biggr] 
\end{multline*} 
be the maximal torus of $G$. 
  The maximal torus $\IT$ acts on 
 \[
 X_{11} =
 \begin{pmatrix}
  x_{11} & 0_{1\times s-1} \\
  0_{r-1\times 1} &  0_{r-1\times s-1}\\
   \end{pmatrix}
   \quad
\text{and }
\quad
 Y_{11} =
 \begin{pmatrix}
  y_{11} & 0_{1\times r-1} \\
  0_{s-1\times 1} & 0_{s-1\times r-1}\\
 
 \end{pmatrix}
\] 
as explained in section $3$. Under this action $X_{11}$ and $Y_{11}$ are the weight vectors with weights $
\frac{a_{11}}{b_{11}}$ and  $\frac{b_{11}}{a_{11}}$ respectively.
\subsection{A representation theoretic approach to calculate the canonical class}
For the maximal torus $\IT \subset G$, the canonical differential  $\sigma_{1, \cdots, r}= \\ \dfrac{d\zeta_{1}\wedge \dots \wedge d\zeta_{rs+r(s-r)}}{(X_{s_{1},\cdots, s_{r}})^{r}}$  is a weight vector with weight $1$.  Similarly all the $\sigma= \sigma_{i_{1}, \cdots, i_{r}}$ are weight vectors with  unit weight. But the maximal torus  $\IT \subset G$ does not normalise the stabiliser $H$. If we choose the restricted torus $\IT_{H}=\IT \cap N_{H}$, where $N_{H}$ is the normaliser of $H$ then $\IT_{H}$ 
$$
\left\{  T_{A}, \text{  }
T_{B}= 
 \begin{pmatrix}
  T_{A} &  0_{r\times s} \\
  0_{s\times r}  & *_{s-r\times s-r} \\
    \end{pmatrix}  
 \right\} 
 $$ 
 acts on the canonical differential  $\sigma_{1, \cdots, r}= \dfrac{d\zeta_{1}\wedge \dots \wedge d\zeta_{rs+r(s-r)}}{(X_{s_{1},\cdots, s_{r}})^{r}}$ and  $\sigma_{1, \cdots, r}= \dfrac{d\zeta_{1}\wedge \dots \wedge d\zeta_{rs+r(s-r)}}{(X_{s_{1},\cdots, s_{r}})^{r}}$ is a weight vector with weight 
 $1$.
 
 Let $\mathfrak{h}$ and $\mathfrak{g}$ be the Lie algebras of $H$ and $G$ respectively. The tangent space  $T_{G/H}$ to $G/H$ at the identity $H$ can be identified with the quotient vector space $\mathfrak{g/h}$ , and the tangent space to any other $gH\in G/H$ is given by $\mathfrak{g}/g\mathfrak{h}g^{-1}$. The canonical class of the variety $G/H$ is  
\[
K_{G/H}= divisor(\overset{rs+s(s-r)}{\bigwedge} T^{\vee}_{G/H}). 
\]
We write the weight of the canonical differential $K_{G/H}$ as a product of those weights of $G$ which are not weights of $H$. 
\begin{thm}
{\it The weight of the canonical differential $K_{G/H}$ is $1$.}
\end{thm}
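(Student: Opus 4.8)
The plan is to compute the $\IT_H$-weight of the top exterior power of the cotangent space $T^{\vee}_{G/H}$ directly from the weight decomposition of $\mathfrak{g}/\mathfrak{h}$. Since $T_{G/H}$ at the identity coset is $\mathfrak{g}/\mathfrak{h}$, its weight --- taken as the product of those weights of $\mathfrak{g}$ that do not occur in $\mathfrak{h}$ --- determines the weight of the canonical differential up to inversion, so it suffices to show that this product is trivial. I would work throughout with the restricted torus $\IT_H=\IT\cap N_H$, because $\IT$ itself does not normalise $H$; on $\IT_H$ the constraint $b_{ii}=a_{ii}$ for $1\le i\le r$ identifies the first $r$ characters $\delta_1,\dots,\delta_r$ of $\GL(s)$ with the characters $\epsilon_1,\dots,\epsilon_r$ of $\GL(r)$.

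First I would write down the weights of $\mathfrak{g}=\mathfrak{gl}(r)\oplus\mathfrak{gl}(s)$ block by block. Writing $\beta\in\mathfrak{gl}(s)$ in the block form adapted to the splitting $s=r+(s-r)$, the four blocks carry the weights $\epsilon_a-\epsilon_b$ (top-left, $a,b\le r$), $\epsilon_a-\delta_b$ (top-right), $\delta_a-\epsilon_b$ (bottom-left) and $\delta_a-\delta_b$ (bottom-right), while $\mathfrak{gl}(r)$ contributes a second copy of the weights $\epsilon_i-\epsilon_j$. The subalgebra $\mathfrak{h}$ sits inside $\mathfrak{g}$ as $(\xi,\operatorname{diag}(\xi,\zeta))$, so it absorbs exactly the diagonal copy of the $\epsilon_i-\epsilon_j$ (the single $\xi$ shared between the $\mathfrak{gl}(r)$ factor and the top-left block) together with all of the bottom-right block $\delta_a-\delta_b$. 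Hence the surviving weights of $\mathfrak{g}/\mathfrak{h}$ are one copy of $\{\epsilon_i-\epsilon_j : 1\le i,j\le r\}$, the top-right block $\{\epsilon_a-\delta_b : 1\le a\le r< b\le s\}$, and the bottom-left block $\{\delta_a-\epsilon_b : r< a\le s,\ 1\le b\le r\}$; this accounts for $r^2+2r(s-r)=rs+r(s-r)=\dim V$ weights, as it must.

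Summing these weights is then routine. The square block contributes $\sum_{i,j=1}^{r}(\epsilon_i-\epsilon_j)=0$ by antisymmetry. The two rectangular blocks contribute
\[
(s-r)\sum_{a=1}^{r}\epsilon_a-r\sum_{b=r+1}^{s}\delta_b \;+\; r\sum_{a=r+1}^{s}\delta_a-(s-r)\sum_{b=1}^{r}\epsilon_b,
\]
and the $\epsilon$-terms cancel against each other, as do the $\delta$-terms, so the total is again $0$. Therefore the weight of $\bigwedge^{\dim V} T_{G/H}$, and hence of its dual $\cO(K_{G/H})$, is trivial, that is, the weight is $1$. This is consistent with the chart computation of Section~\ref{s!KV}, where $\sigma$ was already seen to be a weight vector of unit weight.

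The one genuinely delicate point, and the step I would be most careful about, is the bookkeeping of multiplicities in passing to $\mathfrak{g}/\mathfrak{h}$: because $\mathfrak{gl}(r)$ is embedded diagonally into both the first factor and the top-left block of the second factor, one must verify that $\mathfrak{h}$ removes precisely one of the two copies of each weight $\epsilon_i-\epsilon_j$ (neither both nor none), and that the identification $\delta_i=\epsilon_i$ on $\IT_H$ is used consistently. Everything else is a symmetric cancellation.
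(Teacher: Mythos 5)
Your proof is correct and takes essentially the same approach as the paper: the paper likewise decomposes the weights of $\mathfrak{g}$ not occurring in $\mathfrak{h}$ into blocks under $\IT_{H}$, cancels the square $r\times r$ block by antisymmetry, and pairs each weight in the top-right $r\times(s-r)$ block with its reciprocal in the bottom-left block. Your write-up is in fact somewhat more careful than the paper's, since you explicitly verify that $\mathfrak{h}$ absorbs exactly one of the two copies of each $\epsilon_i-\epsilon_j$ and check the count $r^2+2r(s-r)=rs+r(s-r)=\dim V$, whereas the paper glosses over the multiplicity bookkeeping (and its stated count $rs+s(s-r)$ is evidently a typo for $rs+r(s-r)$).
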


\begin{proof}
In total, there are $rs+s(s-r)$ weights that are weights of $G$ but not of $H$. We show that their product is $1$. 

Each asterisk block in the matrix below represents a collection of  weight spaces of $G$ that are not the weight spaces for $H$,
\[
\left(\begin{array}{c|c}
\ast_{r\times r} &  \ast_{r\times s-r} \\ \hline
\ast_{s-r\times r} &0_{s\times s}   \\ 
     
	\end{array}\right).
\]
The product of weights coming from the top left blok is equal to $1$. Also for every weight corresponding to a weight space in the top right block there is the reciprocal of that coming from the left bottom block. Hence  the product of all the  weights is $1$.

This shows that the weight of the canonical differential is $1$  under the action of $\IT_{H}$ on $\mathfrak{g/h}$.
\end{proof}
\section{Two special cases of $\GSV(r,s)$}
{\bf{Stiefel Varietiy:}} Let $X$  be a $r\times s$ matrix whose entries are real and $Y$ be the transpose of  $X$   given by 
\[
 X =
 \begin{pmatrix}
  x_{11} & x_{12} & \cdots & x_{1s} \\
  \vdots  & \vdots  & \ddots  & \vdots \\ 
  x_{r1} & x_{r2} & \cdots & x_{rs} \\
   \end{pmatrix}
   \qquad
\text{and }
\qquad
 Y =
 \begin{pmatrix}
  x_{11} & x_{21} & \cdots  & x_{r1} \\
   \vdots  & \vdots  & \ddots& \vdots    \\
  x_{1s} & x_{2s} & \cdots & x_{rs}\\
 \end{pmatrix}.
 \]
The  variety defined by $XY=XX^{tr}=I_{r\times r}$ is a Stiefel manifold.\\

{\bf{n-Sphere: }}If $X$ is a $1\times n+1$ row vector then the variety defined by $XX^{tr}=1$ is an $n$-sphere $S^{n}$. 
\section{Acknowledgements}
 This work was carried out when author was a mathematics research fellow at the International Center for Theoretical Physics(ICTP), Trieste, Italy. The author is very thankful to the Mathematics Section of the ICTP for awarding this fellowship. 


\end{document}